\newtheorem{theorem}{Theorem}[section]
\newtheorem{lemma}[theorem]{Lemma}
\newtheorem{corollary}[theorem]{Corollary}
\newtheorem{example}[theorem]{Example}
\newtheorem{definition}[theorem]{Definition}
\newcommand{\I}{[0,1]}
\newcommand{\G}{\mathcal{G}}
\newcommand{\uE}[1]{\underset{#1}{\mathbb{E}}}
\newcommand{\E}{\mathbb{E}}
\begin{document}

\title{On the 3-local profiles of graphs}

\author{Hao Huang\thanks{
School of Mathematics, Institute for Advanced Study, Princeton 08540. Email: huanghao@math.ias.edu. Research
supported in part by NSF grant DMS-1128155.
}
\and Nati Linial\thanks{School of Computer Science and engineering, The Hebrew University of Jerusalem, Jerusalem 91904, Israel. Email {\tt nati@cs.huji.ac.il}. Research supported in part by the Israel Science Foundation and by a USA-Israel BSF grant.}
\and Humberto Naves\thanks{Department of Mathematics, UCLA, Los Angeles, CA 90095. Email: {\tt hnaves@math.ucla.edu}.}
\and Yuval Peled\thanks{School of Computer Science and engineering, The Hebrew University of Jerusalem, Jerusalem 91904, Israel. Email {\tt yuvalp@cs.huji.ac.il}}
\and Benny Sudakov\thanks{Department of Mathematics, ETH, 8092 Zurich, Switzerland and Department of Mathematics, UCLA, Los Angeles, CA 90095. Email: 
bsudakov@math.ucla.edu. 
Research supported in part by NSF grant DMS-1101185, by AFOSR MURI grant FA9550-10-1-0569 and by a USA-Israel BSF
grant.
}}

\date{}

\maketitle

\setcounter{page}{1}

\vspace{-2em}
\begin{abstract}
For a graph $G$, let $p_i(G), i=0,...,3$ be the probability that three distinct random vertices
span exactly $i$ edges. We call $(p_0(G),...,p_3(G))$ the {\em 3-local profile}
of $G$.
We investigate the set ${\cal S}_3 \subset \mathbb R^4$ of all vectors $(p_0,...,p_3)$ that are arbitrarily close to the 3-local profiles of arbitrarily large graphs. We give a full description of the projection of ${\cal S}_3$ to the $(p_0, p_3)$ plane. The upper envelope of this planar domain is obtained from cliques on a fraction of the vertex set and complements of such graphs. The lower envelope is Goodman's inequality $p_0+p_3\geq\frac{1}{4}$. We also give a full description of the triangle-free case, i.e., the intersection of ${\cal S}_3$ with the hyperplane $p_3=0$. This planar domain is characterized by an SDP constraint that is derived from Razborov's flag algebra theory.
\end{abstract}
\section{Introduction}
\label{section_intro}
For graphs $H,G$, we denote by $d(H;G)$ the induced density of the graph $H$ in the graph $G$. Namely, the probability that a random set of $|H|$ vertices in $G$ induces a copy of the
graph $H$.

Many important problems and theorems in graph theory can be formulated in the framework of graph densities. Most of the emphasis so far has been on edge counts, or equivalently,
on maximizing $d(K_2;G)$ subject to some restrictions. Thus Tur\'an's theorem determines $\max d(K_2;G)$ under the assumption $d(K_s;G)=0$ for some $s\ge 3$. The theorem further says
that the optimal graph is the complete balanced $(s-1)$-partite graph. This was substantially extended by Erd\H{o}s and Stone \cite{erdos-stone} who determined $\max d(K_2; G)$ under
the assumption that the $H$-density (not induced) of $G$ is zero for some fixed graph $H$. Their theorem also shows that the answer depends only on the chromatic number of $H$.
Ramsey's theorem shows that for any two integers
$r, s \ge 2$, every sufficiently large graph $G$ has either $d(K_s, G)>0$ or $d(\overline{K_r}, G)>0$. The Kruskal-Katona Theorem \cite{katona, kruskal}, can be stated as saying that
$d(K_r; G)=\alpha$ implies that $d(K_s; G) \le \alpha^{s/r}$ for $r \leq s$. Finding $\min d(K_s; G)$ under the assumption $d(K_r; G)=\alpha$ turns out to be more difficult. The case
$r=2$ of this problem was solved only recently in a series of papers by Razborov \cite{razborov}, Nikiforov \cite{nikiforov} and Reiher \cite{reiher}. A closely related question is to
minimize $d(K_s; G)$ given that $d(\overline{K}_r; G) = \alpha$ for some real $\alpha \in [0,1]$ and integers $r, s \ge 2$. The case $\alpha=0$ of this problem was
posed by Erd\H{o}s more than 50 years ago. Although, recently Das et al \cite{das-et-al}, and independently Pikhurko
\cite{pikhurko}, solved it for certain values of $r$ and $s$ it is still open in general.

Numerous further questions concerning the numbers $d(H;G)$ suggest themselves. Goodman~\cite{goodman} showed that $\min_G d(K_3; G) + d(\overline{K}_3; G) = 1/4-o(1)$, and the random graph
$G(n, \frac 12)$ shows that this bound is tight. Erd\H{o}s \cite{erdos-false} conjectured that a $G(n, \frac 12)$ graph also minimizes $d(K_r; G) + d(\overline{K}_r; G)$ for all $r$,
but this was refuted by Thomason \cite{thomason} for all $r \ge 4$. A simple consequence of Goodman's inequality is that $\min_G \max \{d(K_3; G), d(\overline{K}_3; G)\}=1/8$. The
analogous statement for $r=4$ is not true as can be shown using an example of Franek and R\"odl \cite{franek-rodl} (see \cite{cl_ac} for the details). On the other hand, the max-min
version
of this problem is now solved. As we have recently
proved~\cite{cl_ac}, $\max_G \min \{d(K_r; G), d(\overline{K}_r; G)\}$ is obtained by a clique on a properly chosen fraction of the vertices.

Closely related to these questions is the notion of inducibility of graphs, first introduced in \cite{pippenger1975inducibility}. The inducibility of a graph $H$ is defined as $\lim_{n
\to \infty} \max_G d(H;G)$, where the maximum is over all $n$-vertex graphs $G$. This natural parameter has been investigated for several types of graphs $H$. E.g., complete bipartite
and multipartite graphs \cite{bollobas1986maximal, brown1994inducibility}, very small graphs \cite{exoo1986dense, hirst2011inducibility} and blow-up graphs
\cite{hatami2011inducibility}.

In light of this discussion, the following general concept suggests itself.
\begin{definition}\label{qstn_general} For a family of finite graphs $\mathcal{H}=(H_1,...,H_t)$, let $d(\mathcal{H}; G):=(d(H_1; G),\ldots d(H_t;G))$. Define
$\Delta(\mathcal H)$ to be the set of all $\bar{p}=(p_1,...,p_t)\in\I^t$ for which there exists a sequence of graphs $G_n$, such that $|G_n|\to\infty$ and $d(\mathcal H;G_n)\to\bar p$.
We likewise define $\Delta_\mathcal{G}(\mathcal H)$ where we require that $G_n\in\mathcal{G}$, an infinite families of graphs of interest (e.g. $K_s$-free graphs).
\end{definition}

The initial discussion suggests that it may be a very difficult task to fully describe $\Delta(\mathcal H)$ or $\Delta_\mathcal{G}(\mathcal H)$. Indeed, it was shown by Hatami and
Norine \cite{undecide} that in general it is undecidable to determine the linear inequalities that such sets satisfy. In this paper we solve two instances of this question.

We denote by $p_i(G)$ the probability that three distinct random vertices in the graph $G$ span exactly $i$ edges. The first theorem describes the possible distributions of 3-cliques
and 3-anticliques in graphs (i.e., of $(p_0,p_3)$). We have Goodman's inequality~\cite{goodman} as a lower bound, and an upper bound from \cite{cl_ac}. We show that these bounds fully describe all possible $(p_0,p_3)$.
\begin{theorem}\label{theorem_cliques}
For $p_0\in\I$, let $\beta$ be the unique root in $[0,1]$ of $\beta^3+3\beta^{2}(1-\beta)={p_0}$. Then, $(p_0,p_3)\in\Delta(\overline{K_3},K_3)$ iff \[p_0+p_3\geq\frac{1}{4}\;\;and\;\; p_3\leq\max \{(1-{p_0}^{1/3})^3 + 3{p_0}^{1/3}(1-{p_0}^{1/3})^{2}, (1-\beta)^3\}.\]
\end{theorem}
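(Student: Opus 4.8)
The plan is to work with graphons: by compactness of the graphon space, $\Delta(\overline{K_3},K_3)$ equals the (closed) set of pairs $\bigl(d(\overline{K_3};W),d(K_3;W)\bigr)$ over all graphons $W$, and I will write $p_0=d(\overline{K_3};W)$, $p_3=d(K_3;W)$. The necessity of the two displayed inequalities needs nothing new: $p_0+p_3\ge\tfrac14$ is Goodman's inequality~\cite{goodman}, and the bound $p_3\le\max\{\cdots\}$ is the upper bound established in~\cite{cl_ac}, whose extremal configurations are precisely cliques on a fraction of the vertices and complements of such graphs. Hence the whole content of the theorem is the converse: realize every $(p_0,p_3)$ with $p_0,p_3\ge0$ satisfying both inequalities.

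I would first check that the two arcs of the upper envelope and the lower edge are each swept out by an explicit one-parameter family. A clique on a $c$-fraction of the vertices together with an independent set on the remaining $1-c$ fraction and no edges between them has $(p_0,p_3)=\bigl((1-c)^2(1+2c),\,c^3\bigr)$, which is exactly the curve $p_3=(1-\beta)^3$, $p_0=3\beta^2-2\beta^3$ with $\beta=1-c$; its complement, the split graphon (clique, independent set, complete bipartite graph between them), sweeps the curve $p_3=(1-p_0^{1/3})^2(1+2p_0^{1/3})$. The two curves meet on the diagonal $p_0=p_3$, and beyond the meeting point they form the claimed upper boundary. The Goodman edge $p_0+p_3=\tfrac14$ is realized by the $\tfrac12$-regular family $W_s$ on $[0,1]=A\sqcup B$ with $|A|=|B|=\tfrac12$, $W_s\equiv1-s$ on $A\times A$ and $B\times B$, and $W_s\equiv s$ on $A\times B$: a direct count gives $p_0+p_3=\tfrac14$ identically, and $(p_0,p_3)$ runs from $(0,\tfrac14)$ through $(\tfrac18,\tfrac18)$ to $(\tfrac14,0)$ as $s$ goes from $0$ to $1$. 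The two axis edges ($p_0=0$, resp.\ $p_3=0$) are covered by disjoint unions of two cliques of complementary sizes, resp.\ their complements.

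To fill the interior of the region $R$ -- which, having a boundary made of convex arcs, is \emph{not} convex, so interpolation between realized points is unavailable -- I would use a small number of two-parameter ``composite'' families: put a clique or an independent set on a $\mu$-fraction of the vertices, place one of the families above on the remaining $1-\mu$ fraction, and join the two blocks either completely or not at all; varying the block type and the join covers, in a few overlapping regimes, the part of $R$ above an intermediate curve and the part between that curve and the Goodman edge (by the symmetry $(p_0,p_3)\mapsto(p_3,p_0)$ coming from complementation, it suffices to treat the half $p_0\le p_3$). The crux is to verify that these images together exhaust $R$: for each family $F$ on its parameter domain one shows that $F$ is continuous, that $F$ restricted to the boundary maps onto the boundary of the target sub-region with winding number $\pm 1$ about each interior point, and hence that $F$ is surjective onto that sub-region; one then checks that the sub-regions glue along their shared interfaces. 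The main obstacle I anticipate is exactly this bookkeeping: engineering the composite families so that, as the parameters vary, $(p_0,p_3)$ sweeps the target without ever exiting $R$, and making the pieces fit along the curved interior interface and at the diagonal corner where the two envelope arcs meet; the per-family density computations themselves are routine polynomial identities.
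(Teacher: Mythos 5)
Your necessity direction and your boundary computations are fine and match the paper (Goodman's inequality plus the upper bound of \cite{cl_ac}; the clique-plus-isolated-vertices curve, its complement, the half-half family with densities $1-s$ inside and $s$ across sweeping the Goodman segment, and the two-clique unions for the axis edges are all correct). The overall strategy -- two-block/three-block constructions plus a winding-number argument that a two-parameter family whose boundary encircles a sub-region must cover it -- is also exactly the strategy of the paper. The problem is that the one step carrying all the weight, covering the \emph{interior} of the region, is only announced, not performed: you never fix the composite families, never compute the images of their parameter-space boundaries, never verify the winding condition for any of them, and never check that the resulting sub-regions glue. You say yourself that ``the crux is to verify that these images together exhaust $R$'' and that this is the obstacle you anticipate; but that verification \emph{is} the proof of the hard direction, so as it stands the argument has a genuine gap rather than a routine bookkeeping deficit.

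Moreover, it is not clear that your proposed catalog (a clique or independent set on a $\mu$-fraction, a boundary family on the rest, joined completely or not at all) suffices. For instance, the lens between the Erd\H{o}s--R\'enyi curve $\{(t^3,(1-t)^3)\}$ and the complement-of-clique arc is not obviously reached by such $0/1$-block composites; the paper handles it with a family having \emph{intermediate} densities in every block, namely $G_{x,a,1-a,1-a}$ from Definition \ref{def_Gxabc} (density $a$ inside a part of measure $x$, density $1-a$ elsewhere), restricted to $a\in[0,\tfrac12]$ and to $a\in[\tfrac12,1]$, whose parameter-square boundaries map onto the clique arc, the complement arc, the Erd\H{o}s--R\'enyi curve and the point $(\tfrac18,\tfrac18)$. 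A second slice, $G_{x,a,a,1-a}$ with $x\in[0,\tfrac12]$, has boundary image consisting of the two axis edges, the Goodman segment and the Erd\H{o}s--R\'enyi curve, and so covers the remaining piece. If you replace your unspecified composites by these two explicit slices (together with Lemma \ref{lemma:rand_graph}, or a graphon-continuity statement, to pass from expected densities to actual graphs), the boundary computations become one-line polynomial identities and your winding-number argument closes the proof; without some such explicit choice and verification, the sufficiency direction is not established.
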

The analogous question concerning $\Delta(\overline{K_r},K_r)$ for $r>3$ is widely open. While the analogous upper bound is proved in~\cite{cl_ac}, the situation with respect to the lower bounds is still poorly understood~\cite{giraud1979probleme, sperfeld2011minimal}.

The second theorem in this paper is proved using the theory of flag algebras \cite{raz_fa}. This theory provides a method to derive upper bounds in asymptotic extremal graph theory. This is accomplished by generating certain semidefinite programs (=SDP) that pertain to the problem at hand. By passing to the dual SDP we derive necessary conditions for membership in $\Delta(\mathcal H)$ or $\Delta_{\mathcal G}(\mathcal H)$. Section \ref{section_fa} contains a self contained discussion, covering this perspective of the theory of flag algebras.

The theorem below demonstrates the special role that bipartite graphs play in the study of triangle-free graphs. As the theorem shows, all 3-local profiles of triangle-free graphs are also realizable by bipartite graph. Moreover, the theory of flag algebras provides a complete answer to this question. This yields a different perspective to the fact that almost all triangle-free graphs are bipartite \cite{er_kl_ro}. We denote the $3$-vertex path by $P_3$ and its complement by $\overline{P_3}$. Also, as usual, $A \succeq 0$ means that the matrix $A$ is positive semi-definite (=PSD). The class of bipartite (resp. triangle-free) graphs is denoted by $\mathcal{BP}$ (resp. $\mathcal{TF}$).
\begin{theorem}\label{theorem_tr_free}
For $p_0,p_1,p_2 \ge 0$ s.t. $p_0+p_1+p_2=1$, the following conditions are equivalent:
\begin{enumerate}[I.]
\item $(p_0,p_1,p_2)\in \Delta_{\mathcal{TF}}(\overline{K_3},\overline{P_3},P_3)$
\item $p_0\left(\begin{matrix}
1&0\\0&0
\end{matrix}\right)+
p_1\left(\begin{matrix}
\frac{1}{3}&\frac{1}{3}\\\frac{1}{3}&0
\end{matrix}\right)+
p_2\left(\begin{matrix}
0&\frac{1}{3}\\\frac{1}{3}&\frac{1}{3}
\end{matrix}\right)\succeq 0$
\item $(p_0,p_1,p_2)\in \Delta_{\mathcal{BP}}(\overline{K_3},\overline{P_3},P_3)$
\end{enumerate}
\end{theorem}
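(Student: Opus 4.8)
The plan is to establish the cycle of implications $\mathrm{III}\Rightarrow\mathrm{I}\Rightarrow\mathrm{II}\Rightarrow\mathrm{III}$. The first implication, $\mathrm{III}\Rightarrow\mathrm{I}$, is immediate from $\mathcal{BP}\subseteq\mathcal{TF}$, which gives $\Delta_{\mathcal{BP}}(\overline{K_3},\overline{P_3},P_3)\subseteq\Delta_{\mathcal{TF}}(\overline{K_3},\overline{P_3},P_3)$. The substance of the theorem is therefore split between a flag-algebra necessary condition ($\mathrm{I}\Rightarrow\mathrm{II}$) and an explicit family of bipartite graphs that saturates it ($\mathrm{II}\Rightarrow\mathrm{III}$).

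For $\mathrm{I}\Rightarrow\mathrm{II}$ I would run the standard flag-algebra ``positivity of a second-moment matrix'' argument with the type $\sigma$ a single labelled vertex and the two $\sigma$-flags of order $2$ being the labelled vertex together with, respectively, a non-neighbour and a neighbour. Concretely: given a triangle-free graph $G$ on $n$ vertices, for each vertex $v$ put $q_v=\deg(v)/(n-1)$ and $x_v=(1-q_v,\,q_v)^{\mathsf T}$, so that $\tfrac1n\sum_v x_vx_v^{\mathsf T}\succeq 0$, being an average of rank-one PSD matrices. Expanding each entry in terms of the three $3$-vertex densities — the $(2,2)$ entry is $\Pr[u_1\sim v\sim u_2]$ for a uniform triple, the $(1,2)$ entry is the edge density minus that, and the $(1,1)$ entry follows from $\sum_i p_i=1$ — and using that in a triangle-free graph $\Pr[u_1\sim v\sim u_2]$ equals exactly $\tfrac13 p_2$ (two common neighbours would force a triangle), one finds that $\tfrac1n\sum_v x_vx_v^{\mathsf T}$ equals, up to an $o(1)$ error from the coincidences $u_1=u_2$, $u_i=v$, precisely the matrix displayed in condition II. Since $\bar p\in\Delta_{\mathcal{TF}}$ is witnessed by a sequence of triangle-free graphs and the PSD cone is closed, letting $n\to\infty$ yields condition II.

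For $\mathrm{II}\Rightarrow\mathrm{III}$ I would first rewrite condition II via the $2\times2$ PSD criterion: the diagonal entries $p_0+\tfrac13 p_1$ and $\tfrac13 p_2$ are automatically nonnegative, and after substituting $p_0=1-p_1-p_2$ the determinant condition becomes simply $(p_1+2p_2)^2\le 3p_2$. I would then realize every such pair by the random bipartite graph $B_{a,q}$ with parts of relative sizes $a$ and $1-a$ and independent cross-edges of probability $q$: a short $3$-vertex computation gives, with $s:=3a(1-a)\in[0,\tfrac34]$, the profile $p_1=2sq(1-q)$, $p_2=sq^2$ (and $p_0=1-p_1-p_2$ automatically). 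Given a target $(p_1,p_2)$ with $(p_1+2p_2)^2\le 3p_2$, set $q=2p_2/(p_1+2p_2)$ and $s=p_2/q^2$; the inequality is exactly equivalent to $s\le\tfrac34$, which in turn is exactly the condition that $3a(1-a)=s$ has a root $a\in[0,1]$. Taking $G_n$ to be a typical sample of $B_{a,q}$ on $n$ vertices (which is bipartite, and whose $3$-local profile concentrates around that of the corresponding graphon) then witnesses $\bar p\in\Delta_{\mathcal{BP}}(\overline{K_3},\overline{P_3},P_3)$.

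The routine parts are the two $3$-vertex density computations; the parts that take care are matching the flag-algebra matrix to the stated one on the nose (the choice of type and flags, and bookkeeping the lower-order terms), and — the conceptual heart of the theorem, namely the assertion that the single SDP constraint is also sufficient — guessing the correct bipartite family and checking that the explicit map $(p_1,p_2)\mapsto(a,q)$ covers the whole SDP region without gaps. The latter is clean once one observes that $\{(p_1,p_2):(p_1+2p_2)^2\le 3p_2\}$ is convex and is swept out by the segments $\{(2sq(1-q),\,sq^2):0\le s\le\tfrac34\}$ as $q$ ranges over $[0,1]$.
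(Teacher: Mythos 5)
Your proposal is correct and takes essentially the same route as the paper: I$\Rightarrow$II is the flag-algebra positivity constraint (your explicit second-moment argument with the vectors $(1-q_v,q_v)$ is precisely the proof of Theorem \ref{main_th_fa}/Corollary \ref{main_cor} specialized to the flags $\bar e,e$, with the matrices of Example \ref{exmp:sfmat}), and II$\Rightarrow$III uses the same random bipartite family with cross-edge probability $q$ and the same inversion $q=2p_2/(p_1+2p_2)$, $3a(1-a)=s$. The only detail to add is the degenerate case $p_1=p_2=0$, where your formula for $q$ reads $0/0$; the paper disposes of this boundary case separately by taking $q=0$.
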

The remainder of this paper is organized as following. In Section \ref{section_rand} we use random graphs to show the realizability of $\Delta_\mathcal{G}(\mathcal H)$. In section \ref{section_cliques} we prove Theorem \ref{theorem_cliques}. In Section \ref{section_fa} we use the theory of flag algebras to derive SDP constraints on membership in $\Delta_\mathcal{G}(\mathcal H)$. In section \ref{section_tf} we prove Theorem \ref{theorem_tr_free}. We close with some concluding remarks and several open problems.
\section{Random Constructions}
\label{section_rand}
Let $\mathcal H=(H_1,...,H_t)$ be a collection of graphs, and $\bar p =(p_1,...,p_t)\in\I^t$. In order to prove that $\bar p\in\Delta_\mathcal{G}(\mathcal H)$, we need arbitrarily large graphs $G$ for which $\|d(\mathcal{H};G)-\bar p\|$ is negligible. We accomplish this using appropriately designed random $G$'s. Let $\Pi$ be a symmetric $n \times n$ matrix with entries in $\I$ and zeros along the diagonal. Corresponding to $\Pi$ is a distribution $G(\Pi)$ on $n$-vertex graphs where $ij$ is an edge with probability $\Pi_{i,j}$ and the choices are made independently for all $n \ge i > j \ge 1$. We say that a graph $G$ is {\em supported} on $\Pi$ if $G$ is chosen from $G(\Pi)$ with positive probability.

\begin{lemma}\label{lemma:rand_graph}
For every list of graphs $H_1,...,H_t$ there exists an integer $N_0$ such that if $n>N_0$ and $\Pi$ is an $n \times n$ matrix as above, then there exists an $n$-vertex graph $G^{\ast}$ supported on $\Pi$ such that
\[
\forall\;i=1,...,t\quad \left|d(H_i;G^{\ast})-\uE{G\sim G(\Pi)}[d(H_i;G)]\right|\leq\frac{1}{\sqrt{n}}
\]
\end{lemma}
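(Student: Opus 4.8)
The plan is to use a standard second-moment / concentration argument: show that for each $H_i$ the random variable $d(H_i; G)$, with $G \sim G(\Pi)$, is concentrated around its expectation, and then union-bound over the finitely many indices $i=1,\dots,t$ to find a single graph $G^\ast$ that is simultaneously close to all expectations. The quantity $\frac{1}{\sqrt n}$ in the statement is the right order of magnitude because $d(H_i;G)$ is an average over the $\binom{n}{|H_i|}$ vertex subsets, and changing the status of a single edge of $G$ affects only an $O(1/n)$ fraction of these subsets.

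First I would fix $i$ and write $d(H_i;G) = \binom{n}{k}^{-1}\sum_{S} \mathbbm{1}[G[S] \cong H_i]$, where $k = |H_i|$ and the sum is over all $k$-subsets $S$ of the vertex set. Its expectation over $G\sim G(\Pi)$ is exactly $\uE{G}[d(H_i;G)]$. To control the deviation I would view $d(H_i;G)$ as a function of the independent edge-indicator variables $\{X_{ab}\}_{a>b}$ and apply a bounded-differences (Azuma–Hoeffding / McDiarmid) inequality: flipping one edge variable $X_{ab}$ changes $\mathbbm{1}[G[S]\cong H_i]$ only for the $S$ containing both $a$ and $b$, of which there are $\binom{n-2}{k-2}$, so the function changes by at most $\binom{n-2}{k-2}/\binom{n}{k} = \frac{k(k-1)}{n(n-1)} = O(1/n^2)$. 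With $\binom n2 = O(n^2)$ edge variables, McDiarmid gives
\[
\Pr\Bigl[\,\bigl|d(H_i;G) - \uE{G}[d(H_i;G)]\bigr| > \tfrac{1}{\sqrt n}\,\Bigr] \le 2\exp\!\Bigl(-\frac{2/n}{\binom n2 \cdot O(1/n^4)}\Bigr) = 2\exp(-\Omega(n)).
\]
Then I would take a union bound over $i=1,\dots,t$: since $t$ is fixed, for $n$ larger than some $N_0 = N_0(H_1,\dots,H_t)$ the total failure probability $2t\exp(-\Omega(n))$ is less than $1$, so there exists a graph $G^\ast$ in the support of $G(\Pi)$ satisfying all $t$ inequalities simultaneously. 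Since $N_0$ depends only on the list $H_1,\dots,H_t$ (through $t$ and the sizes $k_i$), this is exactly the claimed statement.

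The only mild subtlety — rather than a genuine obstacle — is getting the bounded-differences constant right and confirming that the resulting exponent $\Omega(n)$ comfortably beats the factor $t$ for all large $n$; this is a routine check. An alternative to McDiarmid would be a direct variance computation: $\mathrm{Var}(d(H_i;G)) = O(1/n)$ because two $k$-subsets $S, S'$ contribute to the covariance only when they share an edge, and then Chebyshev gives the deviation bound with failure probability $O(1/n)\cdot n = o(1)$ after rescaling — but this would only yield an inequality with $n^{-1/2+\varepsilon}$ or require a slightly larger error term, so the bounded-differences route is cleaner for the exact $\frac1{\sqrt n}$ bound. Either way the argument is self-contained and elementary.
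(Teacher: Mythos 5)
Your proposal is correct and is essentially the paper's own proof: the paper also views $G$ as a $\binom{n}{2}$-dimensional vector of independent edge indicators, applies Azuma's inequality with the same Lipschitz constant $\binom{|H_i|}{2}/\binom{n}{2}$ (equal to your $k(k-1)/(n(n-1))$), obtains failure probability $2\exp(-\Omega(n))$ per graph, and finishes with the union bound over the $t$ graphs. No gaps; nothing to add.
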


We note that the statement need not hold if $G(\Pi)$ is replaced by an arbitrary distribution on $n$-vertex graphs.

\begin{proof}
Fix a graph $H$. Let us view an $n$-vertex graph $G$ as a ${\binom{n}{2}}$-dimensional binary vector. The mapping $G \mapsto d(H;G)$ has Lipschitz constant
$\binom{|H|}{2}/\binom{n}{2}$. We can therefore apply Azuma's inequality and conclude that
\[
\Pr_{G^{\ast}\sim G(\Pi)}\left[\left|d(H;G^{\ast})-\uE{G\sim G(\Pi)}[d(H;G)]\right|\geq\frac{1}{\sqrt{n}}\right]\leq 2\exp\left(-{\frac{\binom{n}{2}}{2n\binom{|H|}{2}^2}}\right).
\]
Using the union bound and denoting $h=\max|H_i|$, we get
\[
\Pr_{G^{\ast}\sim G(\Pi)}\left[\left\|d({\cal H};G^{\ast})-\uE{G\sim G(\Pi)}[d({\cal H};G)]\right\|_{\infty}\leq\frac{1}{\sqrt{n}}\right]\geq
1-2t\exp\left(-{\frac{\binom{n}{2}}{2n\binom{h}{2}^2}}\right)=1-o_n(1).
\]
\end{proof}
This lemma is easily generalized for hypergraphs of greater uniformity.

\section{Distribution of 3-cliques and 3-anticliques}
\label{section_cliques}
In this section we prove Theorem \ref{theorem_cliques} and produce a full description of the set $\Delta(\overline{K_3},K_3)$. We state the known lower and upper bounds and show that they fully describe this set.
\begin{theorem}[Goodman \cite{goodman}]
\label{theorem_goodman}
For every $n$-vertex graph $G$
\[
p_0(G)+p_3(G)\geq\frac{1}{4}-O\left(\frac 1n\right).
\]
\end{theorem}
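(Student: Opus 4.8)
The plan is to use the classical double-counting of ``mixed'' two-edge paths — equivalently, in the red/blue colouring of $K_n$ whose red edges are the edges of $G$, to bound the monochromatic triples from below by bounding the non-monochromatic ones from above. Write $d_1,\dots,d_n$ for the degree sequence of $G$, and for $i=0,\dots,3$ let $n_i=\binom{n}{3}p_i(G)$ be the number of vertex triples spanning exactly $i$ edges, so that $n_0+n_1+n_2+n_3=\binom{n}{3}$ and $p_0(G)+p_3(G)=1-\frac{n_1+n_2}{\binom{n}{3}}$. It therefore suffices to give a good upper bound on $n_1+n_2$, the number of triples spanning either one or two edges.

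Call a path $xvy$ on three vertices (with centre $v$) \emph{mixed} if exactly one of $vx,vy$ is an edge of $G$. The number of mixed paths centred at $v$ is $d_v(n-1-d_v)$, so the total number of mixed paths equals $\sum_{v} d_v(n-1-d_v)$. On the other hand I would verify the elementary fact that a triple spanning $0$ or $3$ edges contains no mixed path, while a triple spanning $1$ or $2$ edges contains \emph{exactly two} mixed paths: in either case the unique ``minority'' edge has two endpoints, each of which is the centre of exactly one mixed path, whereas the third vertex is the centre of a monochromatic path. Hence $2(n_1+n_2)=\sum_{v} d_v(n-1-d_v)$.

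Now bound each term by AM--GM: $d_v(n-1-d_v)\le\tfrac{(n-1)^2}{4}$, so $n_1+n_2\le\tfrac{n(n-1)^2}{8}$, and therefore
\[
\binom{n}{3}\bigl(p_0(G)+p_3(G)\bigr)\;=\;\binom{n}{3}-(n_1+n_2)\;\ge\;\frac{n(n-1)(n-2)}{6}-\frac{n(n-1)^2}{8}\;=\;\frac{n(n-1)(n-5)}{24}.
\]
Dividing through by $\binom{n}{3}=\frac{n(n-1)(n-2)}{6}$ gives $p_0(G)+p_3(G)\ge\frac{n-5}{4(n-2)}=\frac14-\frac{3}{4(n-2)}=\frac14-O(1/n)$, as claimed.

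There is essentially no serious obstacle here; the only point that needs a moment's care is the combinatorial identity $2(n_1+n_2)=\sum_v d_v(n-1-d_v)$, i.e.\ checking via the three-case analysis above that each mixed triple is counted with multiplicity exactly $2$, after which everything is routine. One could equally run the argument through $\sum_v\binom{d_v}{2}=n_2+3n_3$ together with the edge count $m(n-2)=n_1+2n_2+3n_3$ and convexity of $\binom{x}{2}$, but the mixed-path version is the most transparent and makes the $\tfrac14$ constant transparent (it comes from $G$ being regular of degree $\approx n/2$, i.e.\ from a quasirandom graph).
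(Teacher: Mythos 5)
Your proof is correct: the identity $2(n_1+n_2)=\sum_v d_v(n-1-d_v)$ and the AM--GM bound give exactly $p_0(G)+p_3(G)\ge\frac14-\frac{3}{4(n-2)}$, which is the classical double-counting argument of Goodman. The paper itself does not prove this statement but cites Goodman's original result, so your argument is in effect the standard proof and there is nothing to reconcile.
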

\begin{theorem}[\cite{cl_ac}]
\label{theorem_clac}
Let $r, s \ge 2$ be integers and suppose that $d(\overline{K}_r; G) \geq \alpha$ where $G$ is an $n$-vertex graph and $1 \ge \alpha \ge 0$. Let $\beta$ be the unique
root of $\beta^r+r\beta^{r-1}(1-\beta)=\alpha$ in $[0,1]$. Then $$d(K_s; G) \le \max \{(1-\alpha^{1/r})^s + s\alpha^{1/r}(1-\alpha^{1/r})^{s-1},
(1-\beta)^s\}+o(1)$$Namely, given $d(\overline{K}_r; G)$, the maximum of $d(K_s; G)$ is attained up to a negligible error-term either by a clique on some subset of the $n$
vertices, or by the complement of such a graph. In particular, for every $G$
\[
p_3(G)\leq\max \{(1-{p_0(G)}^{1/3})^3 + 3{p_0(G)}^{1/3}(1-{p_0(G)}^{1/3})^{2}, (1-\beta)^3\},
\]
where $\beta$ is the unique root of $\beta^3+3\beta^{2}(1-\beta)={p_0(G)}$ in $[0,1]$.
\end{theorem}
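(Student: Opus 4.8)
The plan is to pass to the language of graphons and analyze, for a fixed $\alpha\in(0,1)$, the optimization problem
$M(\alpha):=\max\{\,d(K_s;W)\,:\,d(\overline{K}_r;W)\ge\alpha\,\}$,
the maximum ranging over all symmetric measurable $W\colon[0,1]^2\to[0,1]$ (graphons), where $d(H;W)$ denotes the induced $H$-density. Compactness of the graphon space in the cut metric, continuity of $W\mapsto(d(K_s;W),d(\overline{K}_r;W))$, and the fact that graphon densities are limits of finite-graph densities together show that this maximum is attained and that it equals the asymptotic value of $\max\{d(K_s;G):|G|=n,\ d(\overline{K}_r;G)\ge\alpha\}$ as $n\to\infty$; so it suffices to determine $M(\alpha)$, the $o(1)$ in the statement recording the passage back to finite $n$. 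First I would note that the constraint may be taken tight at a maximizer: along the path $W_t:=(1-t)W+t$ toward the all-ones graphon, $d(K_s;W_t)$ is nondecreasing while $d(\overline{K}_r;W_t)$ decreases continuously from $d(\overline{K}_r;W)$ to $0$, so by the intermediate value theorem we may slide $t$ until $d(\overline{K}_r;W_t)=\alpha$ without decreasing $d(K_s;\cdot)$. Hence $M(\alpha)=\max\{d(K_s;W):d(\overline{K}_r;W)=\alpha\}$.

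Next I would extract first-order optimality conditions. Introduce the codensities
$f(x,y):=\tbinom{s}{2}\!\int\!\prod_i W(x,z_i)W(y,z_i)\!\prod_{i<j}\!W(z_i,z_j)\,dz\ge 0$
and
$g(x,y):=\tbinom{r}{2}\!\int\!\prod_i(1-W(x,z_i))(1-W(y,z_i))\!\prod_{i<j}\!(1-W(z_i,z_j))\,dz\ge 0$,
so that $f=\partial_{W(x,y)}d(K_s;W)$ and $-g=\partial_{W(x,y)}d(\overline{K}_r;W)$. A standard constrained first-variation argument (comparing the "bang-per-buck" ratio $f/g$ on the region where $d(\overline{K}_r;\cdot)$ is held fixed) then yields a multiplier $\lambda\ge 0$ such that, for almost every $(x,y)$, $f(x,y)>\lambda g(x,y)$ forces $W(x,y)=1$ and $f(x,y)<\lambda g(x,y)$ forces $W(x,y)=0$; that $\lambda\ge0$ follows since $f\ge0$, $g\ge0$ and a negative $\lambda$ would force $W\equiv1$, i.e.\ $d(\overline{K}_r;W)=0<\alpha$. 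Thus intermediate values of $W$ are confined to the equality locus $\{f=\lambda g\}$.

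The heart of the matter — and the step I expect to be by far the hardest — is to show that these conditions single out exactly the two advertised graphons. I would first argue that an optimal $W$ may be taken $\{0,1\}$-valued: on $\{f=\lambda g\}$ one uses a second-variation/convexity argument, or directly compares $d(K_s;W)$ with its value after rounding $W$ on that locus, to conclude that fractional values survive only on a null set. Once $W$ is the graphon of a graph, $f$ and $g$ are polynomials in the sizes of the constancy classes of $W$; imposing $f=\lambda g$ across the threshold between two classes where $W$ jumps from $0$ to $1$, together with the sign rules above, rules out more than two classes and forces $W$ to be either (i) the disjoint union of a clique on a $\gamma$-fraction of the vertices with an independent set, or (ii) its complement, with no remaining free parameter beyond $\gamma$. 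Carrying out this rigidity analysis cleanly — taming every configuration the locus $\{f=\lambda g\}$ can a priori produce — is the real obstacle, and is presumably where most of the work of \cite{cl_ac} goes.

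Finally, with the maximizer known to lie in one of the two families, one substitutes the constraint. In family (i), writing the clique fraction as $1-\beta$, one has $d(\overline{K}_r;W)=\beta^r+r\beta^{r-1}(1-\beta)$ and $d(K_s;W)=(1-\beta)^s$; since $\beta\mapsto\beta^r+r\beta^{r-1}(1-\beta)$ is increasing on $[0,1]$, the equation $\beta^r+r\beta^{r-1}(1-\beta)=\alpha$ pins down $\beta$ uniquely and gives the term $(1-\beta)^s$. In family (ii), with independent-set fraction $\delta$, one has $d(\overline{K}_r;W)=\delta^r$, so $\delta=\alpha^{1/r}$, and $d(K_s;W)=(1-\delta)^s+s\delta(1-\delta)^{s-1}$, giving the term $(1-\alpha^{1/r})^s+s\alpha^{1/r}(1-\alpha^{1/r})^{s-1}$. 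Taking the larger of the two proves the displayed bound. The ``In particular'' clause is then immediate: take $r=s=3$ and $\alpha=p_0(G)=d(\overline{K}_3;G)$, and observe that $p_3(G)=d(K_3;G)$, so the claimed formula for $p_3(G)$ is exactly the $r=s=3$ instance of the general bound, with $\beta$ the root of $\beta^3+3\beta^2(1-\beta)=p_0(G)$ in $[0,1]$.
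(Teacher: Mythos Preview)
The paper does not prove this theorem at all: it is quoted verbatim from the companion paper~\cite{cl_ac} and used as a black box in the proof of Theorem~\ref{theorem_cliques}. So there is no ``paper's own proof'' to compare against here; any proof you supply is necessarily external to the present text.

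As for your outline on its own terms: the reduction to graphons, the compactness/continuity step, and the tightness-of-constraint argument (sliding along $W_t=(1-t)W+t$) are all sound. The first-order Lagrange condition you write is also the natural one. But you yourself flag the decisive gap: going from ``$W$ is $\{0,1\}$-valued off a null set and satisfies $f\gtrless\lambda g\Rightarrow W=1,0$'' to ``$W$ is a clique on a measurable fraction, or the complement of one'' is the entire content of the theorem, and nothing in your sketch forces it. In particular, the claim that fractional values can be rounded away by a ``second-variation/convexity argument'' is not justified (the induced densities $d(K_s;\cdot)$ and $d(\overline{K}_r;\cdot)$ are multilinear, not convex, in $W$, so second variation on the locus $\{f=\lambda g\}$ does not automatically prefer extreme values), and the step ``imposing $f=\lambda g$ across the threshold \dots\ rules out more than two classes'' is asserted rather than argued. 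This rigidity is exactly what \cite{cl_ac} establishes, and it does not fall out of the variational framework alone. So what you have is a correct \emph{plan} whose load-bearing step is left open; it is not yet a proof, and since the present paper simply cites the result, there is nothing further here to compare it with.
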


\begin{figure}[h!]
  \centering
    \includegraphics[width=0.5\textwidth]{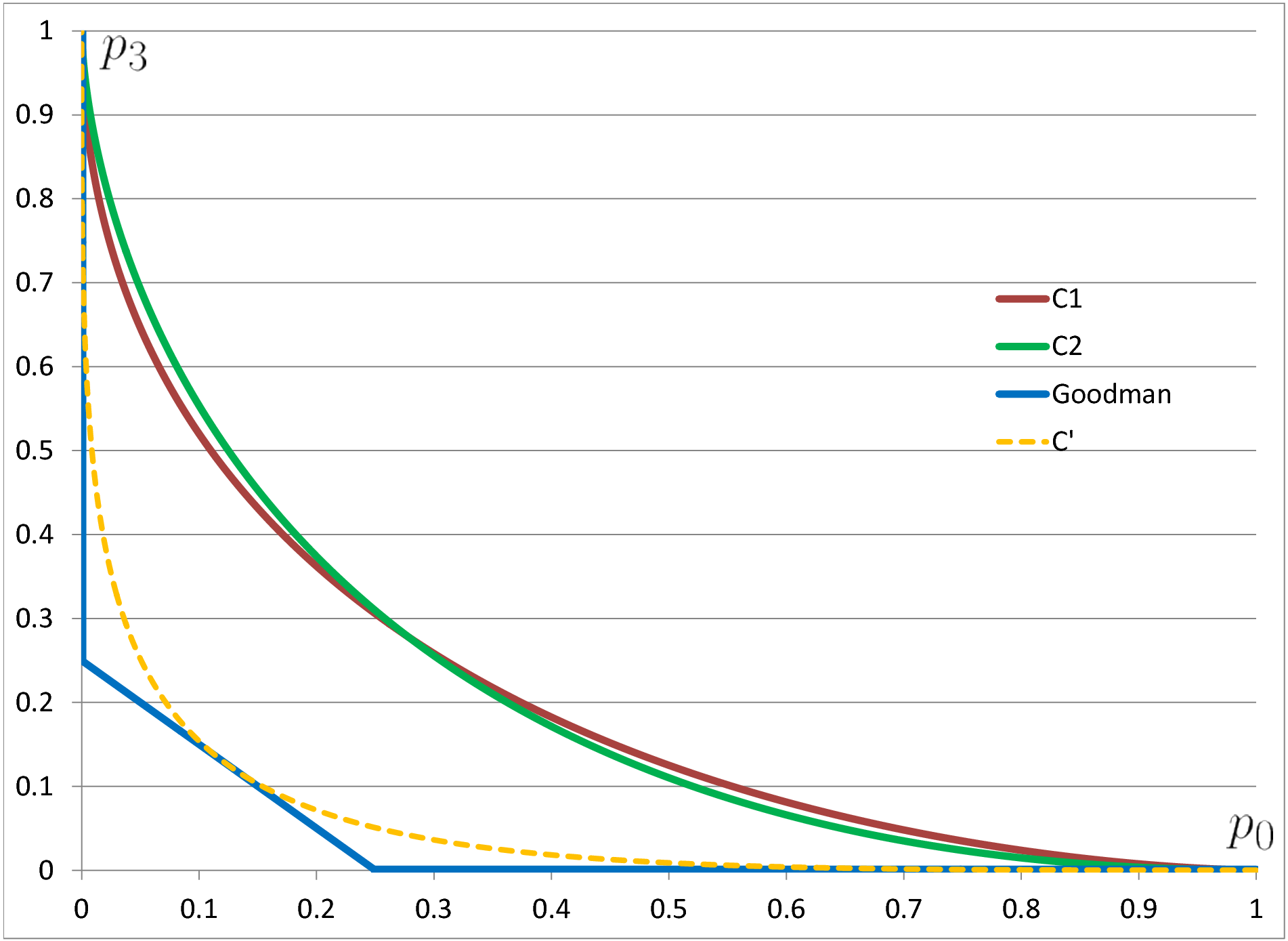}
    \caption{The curves that bound the region of $\Delta (\overline{K_3},K_3)$, and the auxiliary curve $C'$ used in the proof}
\end{figure}

\begin{proof}[Proof of Theorem \ref{theorem_cliques}]
Let $C_1,C_2$ be the $(p_0,p_3)$ curves induced by cliques and complements of cliques resp.
\begin{eqnarray*}
C_1=\left\{\left( (1-x)^3+3(1-x)^2x,x^3\right)~|~x\in\I\right\}\\
C_2=\left\{\left( x^3,(1-x)^3+3(1-x)^2x \right)~|~x\in\I\right\}
\end{eqnarray*}
For $i=1, 2$ let $B_i\subset\I^2$ be the region bounded by $p_0\geq 0,\;p_3\geq 0,\;p_0+p_3\geq\frac{1}{4},$ and by $C_i$. We need to prove that
\[
\Delta (\overline{K_3},K_3) = B_1\cup B_2.
\]
By Theorems \ref{theorem_goodman} and \ref{theorem_clac} $\Delta\subseteq B_1\cup B_2$.

We show that every point in this domain can be approximated arbitrarily well by $(p_0(G),p_3(G))$ for arbitrarily large $G$. We define the following parameterized family of random graphs:
\begin{definition} \label{def_Gxabc}
For every $x,a,b,c\in \I$, $G_{x,a,b,c}$, is the class of random graphs $(V,E)$, where $V = A \dot \cup B$ with $|A| =x|V|$ and $|B|=(1-x)|V|$. Adjacencies are chosen independently among pairs, with
\[
Pr(ij \in E) = \left\{\begin{matrix}
a & i,j\in A \\
b & i,j\in B \\
c & i\in A,\;j\in B\mbox{ or vice versa}\\
\end{matrix}\right.
\]
\end{definition}
A simple computation shows that
\[\E p_0(G_{x,a,b,c}) =
x^3(1-a)^3+(1-x)^3(1-b)^3+3x^2(1-x)(1-a)(1-c)^2+3x(1-x)^2(1-b)(1-c)^2 + o(1)\]
and
\[ \E p_3(G_{x,a,b,c}) = x^3a^3+(1-x)^3b^3+3x^2(1-x)ac^2+3x(1-x)^2bc^2+o(1)\]
By Lemma \ref{lemma:rand_graph},  $(\E p_0(G_{x,a,b,c}),\E p_3(G_{x,a,b,c}))\in\Delta (\overline{K_3},K_3)$ for every $(x,a,b,c)\in\I^4$.
The following curve is used in the proof.
\[
C'=\left\{\left( t^3,(1-t)^3\right)~|~t\in\I\right\}
\]
Consider the following continuous map,
\[
H:\I\times\I\rightarrow\Delta (\overline{K_3},K_3)
\]
\[H(x,a)=\left(\mathbb{E}[p_0(G_{x,a,1-a,1-a})],\mathbb{E}[p_3(G_{x,a,1-a,1-a})]\right).\]
The following claims are immediate.
\begin{enumerate}
\item $H(x,0)=\left( x^3,(1-x)^3+3(1-x)^2x \right)$.
\item $H(x,1)=\left( (1-x)^3+3(1-x)^2x,x^3\right)$
\item $H(1,a)=((1-a)^3,a^3)$
\item $H(x,\frac{1}{2})=(\frac{1}{8},\frac{1}{8})$.
\item $H(0,a)=(a^3,(1-a)^3)$

\end{enumerate}
$H\mid_{\I\times\left[0,\frac{1}{2}\right]}$ is a continuous map from a topological $2$-disc. The boundary of this disk is mapped to a path encircling  $C_2\cup C'$. Therefore, $C_2\cup C'$ is contractible in $im(H)$, and consequently the region bounded by $C_2$ and $C'$ is contained in $im(H)$, and also in $\Delta (\overline{K_3},K_3)$. A similar argument for $H\mid_{\I\times\left[\frac{1}{2},1\right]}$ shows that the region bounded by $C_1$ and $C'$ is contained in $\Delta (\overline{K_3},K_3)$.\\
The remaining area in $\Delta (\overline{K_3},K_3)$ will be covered similarly. Consider the following continuous map,
\[
H_1:\I\times\I\rightarrow\Delta (\overline{K_3},K_3)
\]
\[H_1(x,a)=\left(\E[p_0(G_{x,a,a,1-a})],\E[p_3(G_{x,a,a,1-a})]\right).\]
Again, the following claims are immediate.
\begin{enumerate}
\item $H_1(x,0)=(x^3+(1-x)^3,0)$
\item $H_1(\frac{1}{2},a)=\frac{1}{8}\left(1-(2a-1)^3,1+(2a-1)^3\right)$
\item $H_1(x,1)=(0,x^3+(1-x)^3)$
\item $H_1(0,a)=((1-a)^3,a^3)$

\end{enumerate}
$H_1\mid_{\left[0,\frac{1}{2}\right]\times\I}$ is a continuous map from a topological $2$-disc, mapping its boundary to a path encircling $C',\;[\frac{1}{4},1]\times\{0\},\{(\frac{t}{4},\frac{1-t}{4})\mid t\in\I\}$ and $\{0\}\times[\frac{1}{4},1]$. Therefore, as before, the region between these curves is contained in $\Delta (\overline{K_3},K_3)$. Altogether, $B_1 \cup B_2\subseteq\Delta (\overline{K_3},K_3)$ is obtained.

\end{proof}

\section{Flag algebras - a dual perspective}
\label{section_fa}
Let $\G$ be an infinite family of graphs closed under taking induced subgraphs, let $\mathcal H =(H_1,...,H_t)$ a collection of graphs. We formulate necessary conditions for membership in the set $\Delta_\G(\mathcal H)$ which are stated in terms of feasibility of some SDP. This part is self-contained, and concentrates on the connections between the theory of flag algebras and standard arguments in discrete optimization.
\begin{definition}
An $(s,k)$-\emph{flagged graph} $F=(H,U)$ consists of an $s$-vertex graph $H$ and a {\em flag} $U=(u_1,...,u_k)$, an \emph{ordered} set of $k$ vertices in $H$. An isomorphism $F\cong
F'$ between flagged graphs $F=(H,U)$ and $F'=(H',U')$ is a graph isomorphism
$$
\varphi:V(H)\longrightarrow V(H')\mbox{ such that }\varphi(u_i)=u'_i\quad\forall i.
$$
\end{definition}

\begin{definition}\label{def:prob_flag}
Let $G$ be a graph and $F_1,F_2$ be $(s,k)$-flagged graphs.
Choose uniformly at random two subsets $(V_1,V_2)$ of $V(G)$ of size $s$ with intersection $U = V_1 \cap V_2$ of cardinality $k$ and choose a random ordering of $U$. Define
\[
p(F_1,F_2;G)=\Pr\left[F_i \cong (G|_{V_i},U)\quad i=1,2\right].
\]
Associated with every list $F_1,...,F_l$ of $(s,k)$-flagged graphs is the $l \times l$ matrix $A^G=A^G(F_1,...,F_l)$
\[
\forall i,j\quad A^G(F_1,...,F_l)_{i,j}=p(F_i,F_j;G).
\]
\end{definition}
Note that $A^G$ is a symmetric matrix.
\begin{example}\label{exmp:sfmat}
Denote by $e$ (resp. $\overline{e}$) the edge (its complement) with one flagged vertex. Also, $P_3$ denotes the path on 3 vertices. Then
\[
A^{P_3}(\bar{e},e)=\left(\begin{matrix}
0&\frac{1}{3}\\
\frac{1}{3}&\frac{1}{3}
\end{matrix}\right)
\]
\begin{proof}
Let $V_1,V_2\subset V(P_3)$ be chosen randomly with $|V_1|=|V_2|=2$ and $|V_1\cap V_2|=1$. First, $p(\bar{e},\bar{e};P_3)=0$ since either $V_1$ or $V_2$ spans an edge. Also, $ p( e,e;P_3)=\frac 13$ since both sets span an edge iff their common vertex has degree 2. Finally,  $p(\bar e,e;P_3)=\frac 13$ since the common vertex has degree 1 with probability 2/3, and conditioned on that, the first set $V_1$ spans an edge with probability 1/2.
\end{proof}
\end{example}
We denote by $PSD(l)$ the cone of $l\times l$ positive semi-definite matrices. The following theorem is an analogue of Theorem 3.3 in \cite{raz_fa}.
\begin{theorem}\label{main_th_fa}
Let  $F_i,\;i=1,...,l$, be $(s,k)$-flagged graphs. For an $n$-vertex graph $G$,
\[
\mbox{dist}\left(A^G(F_1,...,F_l),PSD(l)\right)=O\left(\frac{1}{n}\right)
\]
where $\mbox{dist}$ stands for distance in $l_2$.
\end{theorem}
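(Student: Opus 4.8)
The plan is to produce an explicit positive semi-definite matrix $B^G$ lying within $\ell_2$-distance $O(1/n)$ of $A^G$; the candidate comes from ``rooting'' at a random flag. First I would fix an ordered $k$-tuple $\theta=(u_1,\dots,u_k)$ of distinct vertices of $G$, let $W$ be a uniformly random $(s-k)$-subset of $V(G)\setminus\{u_1,\dots,u_k\}$, and define a vector $x^\theta\in\mathbb{R}^l$ by
\[
x^\theta_i=\Pr_{W}\big[(G|_{\,\theta\cup W},\theta)\cong F_i\big],\qquad i=1,\dots,l .
\]
Each rank-one matrix $x^\theta(x^\theta)^\top$ is PSD, so the average
\[
B^G:=\mathbb{E}_{\theta}\big[x^\theta(x^\theta)^\top\big]
\]
over a uniformly random ordered $k$-tuple $\theta$ of distinct vertices is PSD as well, i.e.\ $B^G\in PSD(l)$. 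It then remains to bound $\|A^G-B^G\|_2$.

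Next I would identify the entries of $A^G$ and $B^G$ as probabilities in two almost identical sampling experiments. Unwinding the definitions, $B^G_{i,j}$ is the probability that, after choosing $\theta$ and then two \emph{independent} uniformly random $(s-k)$-subsets $W_1,W_2$ of $V(G)\setminus\theta$, one has $(G|_{\,\theta\cup W_1},\theta)\cong F_i$ and $(G|_{\,\theta\cup W_2},\theta)\cong F_j$. On the other hand, choosing $(V_1,V_2)$ uniformly among ordered pairs of $s$-subsets with $|V_1\cap V_2|=k$ is the same as choosing $U=V_1\cap V_2$ uniformly, ordering it at random to obtain $\theta$, and letting $W_1=V_1\setminus U,\ W_2=V_2\setminus U$ be a uniformly random \emph{disjoint} pair of $(s-k)$-subsets of $V(G)\setminus U$; hence $A^G_{i,j}=p(F_i,F_j;G)$ from Definition~\ref{def:prob_flag} is precisely the probability above, conditioned on the event $\{W_1\cap W_2=\emptyset\}$. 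Conditioning changes a probability by at most the probability of the conditioning event failing, so
\[
\big|A^G_{i,j}-B^G_{i,j}\big|\ \le\ \Pr\big[\,W_1\cap W_2\neq\emptyset\,\big].
\]

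Finally, I would estimate this collision probability by a birthday-type bound: for two independent uniformly random $(s-k)$-subsets of an $(n-k)$-element set,
\[
\Pr\big[\,W_1\cap W_2\neq\emptyset\,\big]\ \le\ \mathbb{E}\,|W_1\cap W_2|\ =\ (s-k)\cdot\frac{s-k}{\,n-k\,}\ =\ O\!\left(\tfrac1n\right),
\]
the constant depending only on $s$ and $k$. Since $A^G-B^G$ has only $l^2$ entries, each of size $O(1/n)$, and $l$ is fixed,
\[
\mathrm{dist}\big(A^G,PSD(l)\big)\ \le\ \|A^G-B^G\|_2\ \le\ \|A^G-B^G\|_F\ \le\ l\cdot O\!\left(\tfrac1n\right)\ =\ O\!\left(\tfrac1n\right).
\]
I expect the only point requiring care to be the bookkeeping in the middle paragraph: checking that conditioning the ``root-and-independently-extend'' experiment on disjointness of the two random extensions reproduces exactly the distribution of Definition~\ref{def:prob_flag}, including the random ordering of the shared part. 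Granting that, the positivity of $B^G$ is immediate and the rest is the one-line concentration bound; for the finitely many small $n$ where these samples do not exist, the statement is vacuous.
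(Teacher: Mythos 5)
Your proposal is correct and follows essentially the same route as the paper: replace the disjoint pair of $(s-k)$-sets by an independent pair (incurring an $O(1/n)$ entrywise error via the birthday bound), and observe that the resulting matrix is PSD; your $\mathbb{E}_{\theta}\bigl[x^\theta (x^\theta)^{\top}\bigr]$ is exactly the paper's Gram-matrix factorization $\frac{(n-k)!}{n!}QQ^{T}$. The only difference is presentational, in that you spell out the conditioning and collision-probability estimates that the paper leaves implicit.
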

\begin{corollary}\label{main_cor}
Let $\G$ be a class of graphs that is closed under taking induced subgraphs. Let $\G_n$ be the set of $n$-vertex members of $\G$. Let $\mathcal H=(H_1,...,H_t)$ be a complete list of
all the isomorphism types of graphs in $\G_r$. Let $F_i,\;i=1,...,l$ be $(s,k)$-flagged graphs.
Then for every $(p_1,...,p_t)\in\Delta_{\G}(\mathcal H)$,
\[
\sum_{\alpha=1}^{t}p_\alpha\cdot A^{H_\alpha}(F_1,...,F_l)\succeq 0\,.
\]
\end{corollary}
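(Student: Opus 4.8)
The plan is to derive the corollary from Theorem~\ref{main_th_fa} by means of a ``locality'' identity: for a large graph $G\in\G$, the matrix $A^G(F_1,\dots,F_l)$ is an \emph{exact} convex combination of the matrices $A^{H_\alpha}(F_1,\dots,F_l)$, with the induced densities $d(H_\alpha;G)$ as coefficients. Granting this, the corollary follows by taking a sequence of graphs witnessing $\bar p\in\Delta_\G(\mathcal H)$, passing to the limit, and invoking that the PSD cone is closed.

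So the first step is to prove, for every $G\in\G$ with $|G|\ge r$, the identity
\[
A^G(F_1,\dots,F_l)\;=\;\sum_{\alpha=1}^{t}d(H_\alpha;G)\,A^{H_\alpha}(F_1,\dots,F_l),
\]
equivalently $p(F_i,F_j;G)=\sum_\alpha d(H_\alpha;G)\,p(F_i,F_j;H_\alpha)$ for all $i,j$. (Here one assumes, as one may, that $r\ge 2s-k$; otherwise two $s$-subsets meeting in $k$ vertices do not fit inside an $r$-set and the matrices $A^{H_\alpha}$ are undefined.) I would prove this by a two-stage sampling argument: to sample a uniformly random pair $(V_1,V_2)$ of $s$-subsets of $V(G)$ with $|V_1\cap V_2|=k$, together with a uniform ordering of $U=V_1\cap V_2$, one may equivalently first sample a uniformly random $r$-subset $R\subseteq V(G)$ and then sample the pair and the ordering uniformly inside $R$. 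Indeed, $|V_1\cup V_2|=2s-k$ for every admissible configuration, so each admissible configuration is contained in the same number of $r$-sets, and the two procedures induce the same (uniform) distribution; hence $p(F_i,F_j;G)=\uE{R}[\,p(F_i,F_j;G|_R)\,]$. Since $p(F_i,F_j;G|_R)$ depends only on the isomorphism type of $G|_R$, which --- because $\G$ is closed under induced subgraphs and $|R|=r$ --- is one of $H_1,\dots,H_t$ with $\Pr[G|_R\cong H_\alpha]=d(H_\alpha;G)$, the identity follows; note $\sum_\alpha d(H_\alpha;G)=1$, so this is a genuine convex combination.

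To conclude, let $(p_1,\dots,p_t)\in\Delta_\G(\mathcal H)$ and pick $G_n\in\G$ with $|G_n|\to\infty$ and $d(H_\alpha;G_n)\to p_\alpha$ for every $\alpha$. The identity gives $A^{G_n}(F_1,\dots,F_l)\to\sum_\alpha p_\alpha A^{H_\alpha}(F_1,\dots,F_l)=:A^\ast$, while Theorem~\ref{main_th_fa} gives $\mbox{dist}\left(A^{G_n}(F_1,\dots,F_l),PSD(l)\right)=O(1/|G_n|)\to 0$; by the triangle inequality $\mbox{dist}(A^\ast,PSD(l))=0$, and since $PSD(l)$ is closed we get $A^\ast\succeq 0$. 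The substantive input is Theorem~\ref{main_th_fa}, which I am taking as given, so the corollary is essentially bookkeeping; the only points needing care are checking that the two-stage sampling is \emph{exactly} uniform --- so that the locality identity carries no error term and survives the limit --- and using closure of $\G$ under induced subgraphs so that every induced $r$-vertex subgraph of $G_n$ is accounted for among the $H_\alpha$, after which closedness of the PSD cone finishes the argument.
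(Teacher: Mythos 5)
Your proposal is correct and follows essentially the same route as the paper: the exact identity $A^G=\sum_\alpha d(H_\alpha;G)A^{H_\alpha}$ via the two-stage (law of total probability) sampling argument, followed by Theorem~\ref{main_th_fa} and closedness of the PSD cone in the limit. Your added remarks (the uniformity check for the two-stage sampling and the implicit assumption $r\ge 2s-k$) are just more explicit versions of what the paper leaves tacit.
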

Let us illustrate how this corollary helps us derive an upper bound on $\lim_{n\to\infty}\max_{G\in\G_n}d(H;G)$ for some fixed graph $H$.
(The limit exists since $\max_{G\in\G_n}d(H;G)$ is a non-increasing function of $n$). Note that
\[
d(H;G)=\sum_{\alpha=1}^{t}d(H;H_\alpha)d(H_\alpha;G).
\]
Therefore the following SDP yields an upper bound
\begin{equation*}\label{eqn:the_prog}
\max\quad \sum_{\alpha=1}^{t}d(H;H_\alpha)p_\alpha\quad s.t.
\end{equation*}
\[
\mbox{all~} p_\alpha\geq 0\mbox{~and~}
\sum_\alpha p_\alpha=1
\]\[
\sum_{\alpha=1}^{t}p_{\alpha}\cdot A^{H_\alpha}(F_1,...,F_l)\succeq 0\]
By SDP duality, this maximum can be also upper-bounded by
\[
\min_{Q\in PSD(l)}\left(\max_{1\leq \alpha \leq t}\left[d(H;H_\alpha)+\mbox{Tr}(Q\cdot A^{H_\alpha}) \right]\right)
\]
which is the more familiar form of SDP used in the literature on applications of flag algebras (see, e.g., \cite{raz_forb, das-et-al} ).
The proofs of the above two statements are based on standard arguments.
\begin{proof}[Theorem \ref{main_th_fa} $\implies$ Corollary \ref{main_cor}]
First we prove that for every graph $G$ on at least $r$ vertices,
\[ A^G(F_1,...,F_l)=\sum_{\alpha=1}^{t}d(H_\alpha;G)A^{H_\alpha}(F_1,...,F_l). \]
Namely, that for every $1\le i,j\le l$,
\[
p(F_i,F_j;G)=\sum_{\alpha=1}^{t}d(H_\alpha;G)p(F_i,F_j;H_\alpha).
\]
This is just an application of the law of total probability. On the LHS we sample uniformly two sets $V_1,V_2$ of size $s$ with $|V_1\cap V_2|=k$ from $V(G)$ together with a random
ordering of $V_1\cap V_2$, and on the RHS we first
sample a random set $V'$ of size $r$ from $V(G)$, and then uniformly sample $V_1,V_2$ as above from $V'$.
To finish the proof, let $\bar p=(p_1,...,p_t)\in\Delta_\G(\mathcal H)$. By the definition of $\Delta_\G(\mathcal H)$ and Theorem~\ref{main_th_fa}, for every $\epsilon>0$ there is a
sufficiently large graph $G \in \G$ such that both
$$
|p_\alpha - d(H_\alpha;G)|<\epsilon\quad\forall\alpha
$$
and
$$
\mbox{dist}\left(\sum_\alpha d(H_\alpha;G)A^{H_\alpha},PSD(l)\right)<\epsilon.
$$
Therefore, $\mbox{dist}\left(\sum_\alpha p_\alpha A^{H_\alpha},PSD(l)\right)=0$.
\end{proof}

\begin{proof}[Proof of Theorem \ref{main_th_fa}]
Let $G$ be an $n$-vertex graph. Consider the following equivalent description of the underlying distribution in the definition of the matrix $A^G=A^G(F_1,...,F_l)$. Choose uniformly at random an ordered set $U\subset V(G)$ of size $k$, two \emph{disjoint} sets $S_1,S_2\subset V(G) \setminus U$ of size $s-k$ and let $V_i=S_i\dot\cup U$, $i=1,2$. Thus $A^G_{i,j}$ is the probability that $F_i \cong (G|_{V_i},U)$, for $i=1,2$.
Note that for every fixed $U$, two sets $S_1,S_2\subset V(G) \setminus U$ of size ${s-k}$ chosen uniformly and independently at random are disjoint with probability $1-O(1/n)$. Therefore, it suffices to prove that the matrix $B^G$, defined exactly as $A^G$ except that $S_1,S_2$ are chosen \emph{independently}, is PSD.

Consider the matrix $Q$ with $l$ rows and $\frac{n!}{(n-k)!}$ columns indexed by ordered sets $U\subset V(G)$ of size $k$, defined as following. Choose a random subset $S\subset V(G)\setminus U$ of size $s-k$, and let  $Q_{i,U}=\Pr[F_i\cong (G|_{S\cup U},U)]$. Then,
\[
B^G=\frac{(n-k)!}{n!}QQ^T\succeq 0
\]
\end{proof}

\section{Triangle-free graphs}
\label{section_tf}
In this section we prove Theorem \ref{theorem_tr_free}, by showing that the set
$
\Delta_{\mathcal{TF}}(\overline{K_3},\overline{P_3},P_3)
$
is characterized by the quadratic constraints deduced from the flag algebra theory.
\begin{proof}[Proof of Theorem \ref{theorem_tr_free}]
$(I)\implies (II).$ This implication is a direct application of Corollary \ref{main_cor}. Let $\overline{e}$,$e$ be (2,1)-flagged graphs. $\overline{e}$ (resp. $e$) is the empty (complete) graph over 2 vertices with one flagged vertex. By a straightforward computation (See example \ref{exmp:sfmat}),
\[
A^{\overline{K_3}}(\bar{e},e)=\left(\begin{matrix}1&0\\0&0\end{matrix}\right),\quad
A^{\overline{P_3}}(\bar{e},e)=\left(\begin{matrix}\frac{1}{3}&\frac{1}{3}\\
\frac{1}{3}&0\end{matrix}\right),\quad
A^{P_3}(\bar{e},e)=\left(\begin{matrix}0&\frac{1}{3}\\
\frac{1}{3}&\frac{1}{3}\end{matrix}\right).
\]

\begin{figure}[h!]
  \centering
    \includegraphics[width=0.5\textwidth]{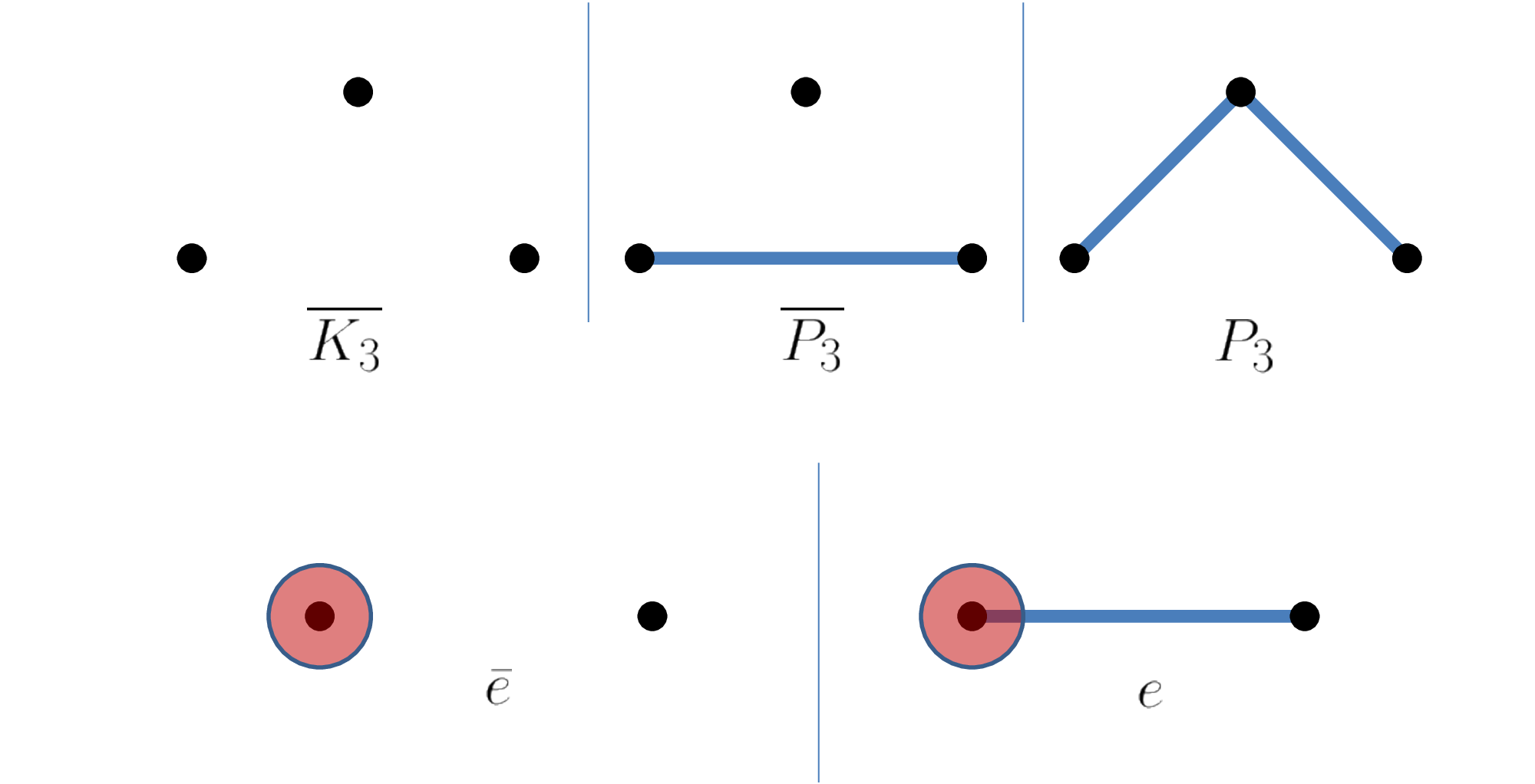}
    \caption{Triangle free graphs and flagged graphs used in the proof}\label{fig:flags}
\end{figure}

Since these are all the graphs on 3 vertices in the family $\mathcal{TF}$, we may apply Corollary \ref{main_cor}, and obtain $(II)$.\\

$(II)\implies (III).$ Suppose $p_0,p_1,p_2$ satisfy the condition in $(II)$. Since $p_0+p_1+p_2=1$, this can be reformulated as
\[
\left(\begin{matrix}3p_0+p_1&1-p_0\\
1-p_0&1-p_0-p_1\end{matrix}\right)\succeq 0,
\]
which implies that
\begin{equation}\label{eqtn:expli_constr}
0 \leq (3p_0+p_1)(1-p_0-p_1)-(1-p_0)^2,
\end{equation}
\[
p_0+p_1\leq 1.
\]
Recall Definition \ref{def_Gxabc} of $G_{x,a,b,c}$, and denote $G_{\alpha,q}:=G_{\alpha,0,0,q}$ a distribution on bipartite graphs, for $\alpha,q\in\I$. Then,
\begin{eqnarray*}
\E [p_0(G_{\alpha,q})]=1-3\alpha(1-\alpha)q(2-q)+o(1).
\end{eqnarray*}
\begin{eqnarray*}
\E[ p_1(G_{\alpha,q})]=6\alpha(1-\alpha)q(1-q)+o(1).
\end{eqnarray*}
By Lemma \ref{lemma:rand_graph}, for every $\alpha, q$, $(\E[p_0(G_{\alpha,q}]),\E[p_1(G_{\alpha,q})],\E[p_2(G_{\alpha,q})])\in\Delta_{\mathcal{BP}}(\overline{K_3},\overline{P_3},P_3)$. Thus, it suffices, given $p_0,p_1$ that satisfy (\ref{eqtn:expli_constr}), to find $(\alpha,q)\in\I^2$ such that,
\[
p_0=1-3\alpha(1-\alpha)q(2-q)\mbox{, ~and~ }
p_1=6\alpha(1-\alpha)q(1-q).
\]
This implies that
\[
q=\frac{2-2p_0-2p_1}{2-2p_0-p_1}\in\I,
\]
and
\[
(1-2\alpha)^2=\frac{(3p_0+p_1)(1-p_0-p_1)-(1-p_0)^2}{3(1-p_0-p_1)}
\]
Miraculously, $\alpha\in\I$ that satisfies this equation exists iff the quadratic constraint in (\ref{eqtn:expli_constr}) are satisfied and $p_0+p_1<1$. Indeed it
is easy to check that in this case the right hand side is non-negative and is $\leq 1$. On the other hand, if
$p_0+p_1=1$, then by (\ref{eqtn:expli_constr}) $p_0=1$ and this profile is attained for $q=0$.
\begin{figure}[H]
  \centering
    \includegraphics[width=0.5\textwidth]{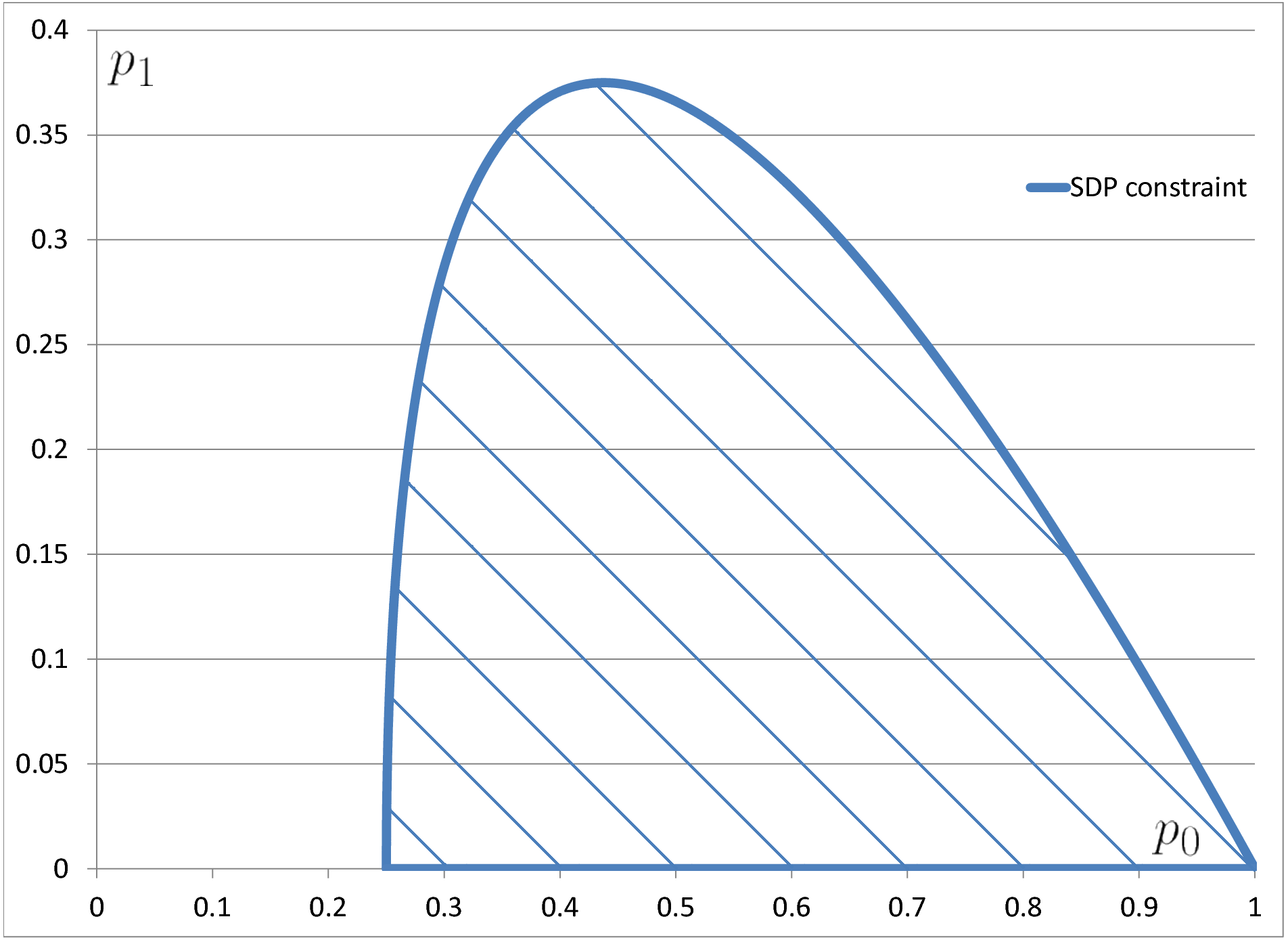}
    \caption{The region of possible $p_0,p_1$ of triangle-free graphs}
\end{figure}

$(III)\implies (I).$ Immediate, since every bipartite graph is triangle free.

\end{proof}

\section{Concluding remarks}
\label{section_concluding}
In this paper we study the set ${\cal S}_3 \subset \mathbb R^4$ of all vectors $(p_0,...,p_3)$ that are arbitrarily close to the 3-local profiles of arbitrarily large
graphs. We show that the projection of this set to the $(p_0,p_3)$ plane is completely realizable by the graphs that are generated by a model which partitions the
vertices into two sets. We also show that the intersection of ${\cal S}_3$ with the plane $p_3=0$, i.e. triangle-free graphs, is completely realizable by a simple
model of random bipartite graphs. We wonder how far these observations can be extended. Razborov's work~\cite{razborov} shows that certain $3$-profiles require the use of
$k$-partite models for arbitrarily large $k$. Also in general, it is not true that a $k$-local profile of every $K_k$-free graph can be realized by
$(k-1)$-partite graph. Indeed, it was shown in \cite{das-et-al}, that already for $k \geq 4$ the minimum density of empty sets of size $k$ in $K_k$-free graphs is strictly smaller than
what can be achieved by $(k-1)$-partite graphs.

It still remains a challenge to get a full description of the set ${\cal S}_3$. The analogous questions concerning $r$-profiles, $r>3$ seem even more difficult. Even
characterizing the profiles of ($r$-cliques, $r$-anticliques), which is solved here for $r=3$, is still open.

\end{document}